\theoremstyle{plain}
\newtheorem{thm}{Theorem}
\newtheorem{lem}{Lemma}
\newtheorem{cor}{Corollary}
\newtheorem*{cnj}{Conjecture}
\theoremstyle{definition}
\newcommand{\idd}{\operatorname{Id}}
\newcommand{\cnv}{\operatorname{conv}}
\newcommand{\crd}{\operatorname{cr}}
\newcommand{\trc}{\operatorname{trace}}
\newcommand{\vll}{\operatorname{vol}}
\newcommand{\eps}{\varepsilon}
\newcounter{asyfigcntr}
    \par\stepcounter{asyfigcntr}%
\begin{document}

\title{When is the ball a local pessimum for covering?}

\author{Yoav Kallus}
\address{Yoav Kallus, Center for Theoretical Science, Princeton University, Princeton, New Jersey 08544}

\date{\today}

\begin{abstract}
We consider the problem of identifying the worst point-symmetric shape for covering $n$-dimensional
Euclidean space with lattice translates. Here we focus on the dimensions where
the thinnest lattice covering with balls is known and ask whether the ball is
a pessimum for covering in these dimensions compared to all point-symmetric convex shapes.
We find that the ball is a local pessimum in 3 dimensions,
but not so for 4 and 5 dimensions.
\end{abstract}

\maketitle

\section{Introduction}

The lattice sphere packing and covering problems can be stated in similar ways: in both
problems we look for an optimal arrangement of equal-sized spheres centered
at points of a lattice; whereas in the packing problem
we must have no multiply-covered volume and we must minimize the amount of uncovered volume, in the
covering problem we must have no uncovered volume and we must minimize the amount of multiply covered volume.
The lattice sphere packing problem has attained great importance partly because many of the lattices
that give good packing fractions in low dimensions are related to objects of exceptional symmetry in geometry \cite{SPLAG,thompson}.
By contrast, the lattices which give good covering fractions in low dimensions do not seem to be imbued with
symmetry to the same extent. In fact, many highly symmetric lattices seem instead to be locally pessimal at
covering with spheres \cite{vallentin}. Instead of lattices that are pessimal
at covering with spheres relative to nearby lattices, in this article we are interested in
shapes that are pessimal for covering relative to nearby shapes.

In two and three dimensions it has been shown that the rounded octagon and the ball
respectively are locally pessimal for packing among convex point-symmetric shapes\cite{kalluspack,Nazarov},
and in both cases these shapes have been conjectured to be globally pessimal \cite{Gardner95,Reinhardt}.
Reinhardt's conjecture about the rounded octagon has been an open problem since 1934.
In the case of covering, it has been shown by L. Fejes T{\'o}th that the disk is the pessimal
shape for covering the plane \cite{toth-cover}. In this article we show that in three dimensions, the ball
is locally pessimal for covering, but is not so in four and five dimensions. We limit our attention to these
dimensions because those are the only dimensions where the lattice sphere covering problem is solved.
In all these dimensions, $A_n^*$ is known to be the optimal sphere covering lattice \cite{SPLAG}.
Nevertheless, we establish results that would make it easy to determine whether the $n$-dimensional
ball is locally pessimal for covering if the optimal sphere covering lattice in that dimension were to become known.

The investigation in this article follows similar lines to the one in Ref. \cite{kalluspack}. While
many of the concepts and the results in the case of covering are analogous to those in the case
of packing and require only minor changes, others require gross changes. With this in mind, we
have set out to write this article so as to be self-contained. To this end, much of the
content of this paper is closely similar (sometimes nearly verbatim) to the content of Ref.\ \cite{kalluspack}.
We omit a result and cite Ref. \cite{kalluspack} only when the result
is nearly identical, not merely analogous. Many of the concepts have identical names to the
analogous concepts in the case of packing, except with the addition of the modifier ``covering''.
For convenience we usually drop this modifier, implicitly referring throughout the paper out of two
analogous concepts to the one that deals with covering.

\section{Convex Bodies and Lattices}

An $n$-dimensional {\it convex body} is a convex, compact subset of $\mathbb{R}^n$
with a nonempty interior. A body is {\it symmetric about a point} (or point-symmetric) if 
there is a point $\mathbf x$ such that $2\mathbf x-K=K$, and the body is \textit{origin-symmetric}
if that point is the origin. In this article we discuss only such bodies, and
we will implicitly assume that every body mentioned is symmetric about a point.
We denote by $B^n$ the Euclidean unit ball of $\mathbb{R}^n$.
The space of point-symmetric convex bodies $\mathcal K^n_0$ in $\mathbb{R}^n$
is a metric space equipped with the Hausdorff metric
$\delta_H(K,K')=\min\{\eps : K\subseteq K'+\epsilon B^n, K'\subseteq K+\epsilon B^n\}$.
The set of bodies $K$ satisfying $a B^n\subseteq K\subseteq b B^n$ for $b>a>0$ is compact \cite{Gruber}.

Let $S^{n-1} = \partial B^n$ be the unit sphere.
The {\it radial function} of a body in the direction $\mathbf{x}\in S^{n-1}$
is given by $r_K(\mathbf{x}) = \max\{\lambda:\lambda\mathbf{x}\in K\}$.
A body is uniquely determined by its radial function since $K=\bigcup_{\mathbf{x}\in S^{n-1}}[0,r_K(\mathbf{x})]\mathbf{x}$.
For origin-symmetric bodies, the radial distance is an even function.

An $n$-dimensional {\it lattice} is the image of the integer lattice $\mathbb{Z}^n\subseteq\mathbb{R}^n$
under some non-singular linear map $T$. The determinant $d(\Lambda)$ of a lattice $\Lambda=T\mathbb{Z}^n$
is the volume of the image of the unit cube under $T$ and is given by $d(\Lambda)=|\det T|$.
The space $\mathcal L^n$ of $n$-dimensional lattices can be equipped with the
metric $\delta(\Lambda,\Lambda')=\min\{||T-T'|| : \Lambda = T\mathbb{Z}^n, \Lambda'= T'\mathbb{Z}^n\}$,
where $||\cdot||$ is the Hilbert-Schmidt norm.

We call $\Lambda$ a covering lattice for $K$ if for each point $\mathbf{x}\in\mathbb{R}^n$, there
is a lattice point $\mathbf{l}\in\Lambda$ such that $\mathbf{x}\in K+\mathbf{l}$, i.e.,
$\{K+\mathbf{l} : \mathbf{l}\in\Lambda\}$ is a covering of $\mathbb{R}^n$.
The density of this covering is given by $\vll K / d(\Lambda)$, and must be
greater than or equal to one. The set of lattices $\Lambda$, such that $\Lambda$ is
a covering lattice for $K$ and $d(\Lambda)\ge a$ for some body $K$ and $a>0$,
is a compact set \cite{Gruber}.

The {\it critical (covering) determinant} $d_K$ is the maximum, necessarily attained due
to compactness, of all determinants of covering lattices for $K$. 
A lattice attaining this maximum is called a {\it critical (covering) lattice} of $K$. If a covering lattice
of $K$ locally maximizes the determinant amongst covering lattices of $K$,
it is called an {\it extreme (covering) lattice} of $K$.
Clearly, if $K'\supseteq K$, then
$d_{K'}\ge d_{K}$. If this inequality is strict whenever $K'$ is a proper superset of $K$,
we say that $K$ is an {\it inextensible} body \cite{kallusn2}. The optimal covering
fraction for $K$ is $\vartheta(K) = \vll K / d_K$. Note that $\vartheta(TK)=\vartheta(K)$ for any
nonsingular linear transformation $T$. Therefore, we may define $\vartheta$ as a function over the
space of linear classes of $n$-dimensional bodies, equipped with the Banach-Mazur distance
$\delta_{BM}([K],[L])=\min\{t:L'\subseteq K'\subseteq e^t L',K'\in[K],L'\in[L]\}$.
Since this space is compact, there must be a body $K$ with the highest possible
optimal covering fraction amongst all $n$-dimensional bodies. We call this a
{\it (globally) pessimal} body for covering. If a body attains the highest possible
optimal covering fraction in a neighborhood of itself with respect to Hausdorff
distance, then we say it is a \textit{local pessimum} for covering.
A locally pessimal body is necessarily inextensible, but the converse
is not necessarily true.

Below we show that the unit ball is locally pessimal for $n=3$, 
and extensible for $n=4$ and $5$.

\section{Primitive simplices and semieutaxy}

The Voronoi polytope $P_\mathbf{l}$ of a lattice point $\mathbf{l}\in\Lambda$ is the set of all points for which $\mathbf{l}$ is the closest
lattice point, that is, $P_\mathbf{l}=\{\mathbf{x}\in\mathbb{R}^n : ||\mathbf{x}-\mathbf{l}||\le||\mathbf{x}-\mathbf{l'}||\text{ for all }
\mathbf{l'}\in\Lambda\}$. Note that $P_\mathbf{l}=P_0+\mathbf{l}$. The Voronoi polytopes of
the lattice points of $\Lambda$ form the cells of a $\Lambda$-periodic honeycomb, which we call the Voronoi honeycomb of $\Lambda$.
If the combinatorial type of the Voronoi polytope $P_0$
(equivalently, the combinatorial type of the Voronoi honeycomb) is the
same as for all lattices in a neighborhood of $\Lambda$, we say that $\Lambda$ is simple.
If $\Lambda$ is simple, then each vertex of the Voronoi honeycomb lies at the intersection
of $n+1$ cells \cite{barnesdickson}. Therefore, modulo translations by vectors of $\Lambda$, each vertex of the Voronoi polytope $P_0$
is equivalent to exactly $n$ others and all equivalent vertices are equidistant from the origin. Therefore,
the Voronoi polytope can be described as the convex hull of simplices, each with a circumscribing sphere
centered at the origin and with vertex separations in $\Lambda$. We call these simplices the primitive simplices of $\Lambda$.
Whenever $S$ is a primitive simplex of $\Lambda$, so is $-S$.

A Delone simplex of $\Lambda$ whose circumcenter is at some vertex $\mathbf{x}$ of $P_0$,
when translated by $-\mathbf{x}$, is simply $-S$, where $S$ is
the unique primitive simplex with vertex $\mathbf{x}$. For a simple lattice, the Delone triangulation retains its
combinatorics for nearby lattices $T\Lambda$, where $||T-\idd||$ is small enough, and the
primitive simplices of $T\Lambda$ are simply translates of the images under $T$ of the primitive
simplices of $\Lambda$.

A lattice $\Lambda$ is a covering lattice
for the ball of radius $r$ if and only if $r\ge \mu(\Lambda)=\max_S\crd(S)$, where
the maximum runs over all primitive simplices $S$ of $\Lambda$, and $\crd(S)$ denotes
the circumradius of $S$. We call $\mu(\Lambda)$ the covering radius of $\Lambda$,
and we refer to the primitive simplices of $\Lambda$ attaining $\mu(\Lambda)$ as the maximal primitive simplices.
We denote the set of maximal primitive simplices of $\Lambda$ by $X(\Lambda)$.
Since a Delone simplex with exterior circumcenter must have an adjacent Delone
simplex with a larger radius, the circumcenters of maximal primitive simplices lie
in the simplex, and in the interior for simple lattices.

\begin{lem}\label{trlem}Let $\Lambda$ be a simple lattice of covering radius $1$.
There are $C>0$ and $\eps_0>0$ such that whenever
$K$ is a nearly spherical body in the sense that $(1-\eps)B^n\subseteq K\subseteq(1+\eps)B^n$
and $\eps<\eps_0$, then $\Lambda$ is a covering lattice of $K$ if and only if
there exist $\mathbf{t}_S$ for all $S\in X(\Lambda)$ such that $S+\mathbf{t}_S\subseteq K$
and $||\mathbf{t}_S||<C\eps$.\end{lem}

\begin{proof}First let us assume that $\Lambda$ is a covering lattice of $K$. Note that for each
simplex $S\in X(\Lambda)$, $-S$ is a translate of a Delone simplex, and that the bodies $K+\mathbf{l}$,
where $\mathbf{l}$ runs over the vertices of the Delone simplex, must cover the Delone simplex.
As a simple consequence of the nerve theorem (Theorem 10.7 of Ref.\ \cite{nerve}),
there must be a point $\mathbf{x}$ common to all $n+1$ bodies, and that point
is at a distance smaller than $C\eps$ from the simplex circumcenter, since it lies in the intersection
of the balls of radii $1+\eps$ centered at the vertices.
Since $\mathbf{x}\in K+\mathbf{l}$for all vertices $\mathbf{l}$ of the Delone simplex, the points
$\mathbf{x}-\mathbf{l}$ are in $K$, and their convex hull, which is a translate of $S$ is contained in $K$.

Now let us assume that $S+\mathbf{t}_S\subset K$ for all $S\in X(\Lambda)$. By the fact that
$K$ is nearly spherical we also have that $K$ contains all the non-maximal primitive simplices
of $\Lambda$. Since the primitive simplices have interior circumcenters, the
$S+\mathbf{t}_S\subset (1+\eps)B^n$ implies that $||\mathbf{t}_S||<C\eps$.
We will show that $\Lambda$ is a covering lattice for $P'$ the convex hull of $S+\mathbf{t}_S$ where
$S$ runs over all primitive simplices of $\Lambda$ and $\mathbf{t}_S=0$ for non-maximal $S$.

Consider the Voronoi polytope $P_0$ and form a triangulation of it,
i.e., a subdivision of $P_0$ into simplices with no new vertices and
such that any two simplices intersect at a common face or not at all.
When repeated for all $P_\mathbf{l}$ this is a $\Lambda$-periodic
triangulation of $\mathbb{R}^n$. Now, leaving the combinatorics
of the triangulation unchanged, let us translate each
of its vertices by $\mathbf{t}_S$ whenever the
vertex is equivalent to a vertex of $S$ modulo translation
by vectors of $\Lambda$. If $\eps$ is small enough, the result
is still a triangulation of $\mathbb{R}^n$.
The cells obtained by the union of the simplices whose union
previously gave the cells of the Voronoi honeycomb also form a $\Lambda$-periodic
subdivision of space. These cells are in general not convex,
but their convex hulls are lattice translates of $P'$.
Therefore, the lattice translates of $P'$ cover $\mathbb{R}^n$ and $\Lambda$
is in fact a covering lattice for $P'$ and for $K\supseteq P'$.
\end{proof}

Let $S$ be a primitive simplex of a simple lattice $\Lambda$ 
and let $\mathbf{x}_1,\ldots,\mathbf{x}_{n+1}$ be its vertices.
We define a symmetric linear map associated with $S$ as follows:
$$Q_S(\cdot) = \sum_{j=1}^{n+1} \alpha_j \langle\mathbf{x}_j,\cdot\rangle\mathbf{x}_j$$
where $\alpha_j$ are the barycentric coordinates of the circumcenter of $S$:
\begin{equation}\label{alpheqn}
    \sum_{j=1}^{n+1} \alpha_j \mathbf{x}_j =0\text{ and } \sum_{j=1}^{n+1}\alpha_j=1\text.
\end{equation}
Note that $Q_S=Q_{-S}$. The importance of $Q_S$ can
be seen for example in the following lemma. Here we work with
the linear space $\mathrm{Sym}^n$ of symmetric linear maps $\mathbb{R}^n\to\mathbb{R}^n$
equipped with the inner product $\langle Q,Q'\rangle = \trc QQ'$.

\begin{lem}\label{crlem}
Let $S$ be a simplex inscribed in the unit sphere centered at the origin and
let $T$ be a nonsingular linear map. Then
$$\crd(TS)^2 = 1 + \langle M,Q_S\rangle + O(||M||^2)\text,$$
where $M=T^T T-\idd$ and the error term is non-negative.
\end{lem}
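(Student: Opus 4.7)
The plan is to write down the system of equations characterizing the circumscribed sphere of $TS$, take a barycentric combination of them to isolate the leading correction, and then estimate the circumcenter separately.

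First I would let $\mathbf{c}$ and $r$ denote the circumcenter and circumradius of $TS$, so that $\|T\mathbf{x}_j - \mathbf{c}\|^2 = r^2$ for every $j = 1,\ldots,n+1$. Expanding the square and using $\|\mathbf{x}_j\|=1$, one has $\|T\mathbf{x}_j\|^2 = \langle(\idd + M)\mathbf{x}_j,\mathbf{x}_j\rangle = 1 + \langle M\mathbf{x}_j,\mathbf{x}_j\rangle$, so the defining equations take the form
\[ 1 + \langle M\mathbf{x}_j,\mathbf{x}_j\rangle - 2\langle T\mathbf{x}_j,\mathbf{c}\rangle + \|\mathbf{c}\|^2 = r^2. \]

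Next I would multiply the $j$-th equation by $\alpha_j$ and sum. The identities \eqref{alpheqn} do all the work: $\sum_j\alpha_j=1$ lets the constant, $\|\mathbf{c}\|^2$, and $r^2$ terms pass through, while $\sum_j\alpha_j T\mathbf{x}_j = T\sum_j\alpha_j\mathbf{x}_j = 0$ annihilates the cross term. The surviving sum identifies with the desired inner product through
\[ \sum_j \alpha_j\langle M\mathbf{x}_j,\mathbf{x}_j\rangle = \trc\Bigl(M\sum_j\alpha_j\mathbf{x}_j\mathbf{x}_j^T\Bigr) = \langle M, Q_S\rangle, \]
yielding the exact identity $r^2 = 1 + \langle M,Q_S\rangle + \|\mathbf{c}\|^2$. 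The error term is therefore $\|\mathbf{c}\|^2$, which is non-negative for free.

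The one step that needs real work is the bound $\|\mathbf{c}\|^2 = O(\|M\|^2)$, i.e.\ $\mathbf{c} = O(\|M\|)$, and this is the main obstacle. To get it, I would subtract the equation for $j=1$ from that for $j=k$, which kills both $r^2$ and $\|\mathbf{c}\|^2$ and leaves the linear system
\[ 2\langle T(\mathbf{x}_k-\mathbf{x}_1),\mathbf{c}\rangle = \langle M\mathbf{x}_k,\mathbf{x}_k\rangle - \langle M\mathbf{x}_1,\mathbf{x}_1\rangle \qquad (k=2,\ldots,n+1). \]
Since $S$ is a non-degenerate simplex, $\{\mathbf{x}_k-\mathbf{x}_1\}_{k=2}^{n+1}$ is a basis of $\mathbb{R}^n$, and for $\|M\|$ small the perturbed vectors $\{T(\mathbf{x}_k-\mathbf{x}_1)\}$ still form a basis whose coordinate matrix has inverse of bounded norm. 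The right-hand sides are manifestly $O(\|M\|)$, so $\mathbf{c} = O(\|M\|)$, and squaring gives the claimed quadratic error. Everything else reduces to the clean algebraic manipulation enabled by \eqref{alpheqn}.
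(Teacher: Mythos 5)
Your proof is correct and rests on the same algebraic heart as the paper's: taking the $\alpha_j$-weighted combination of the circumsphere equations so that $\sum_j\alpha_j\mathbf{x}_j=0$ kills the cross term and $\sum_j\alpha_j\langle M\mathbf{x}_j,\mathbf{x}_j\rangle$ is recognized as $\langle M,Q_S\rangle$. The paper reaches the same identity $r^2 = 1 + \langle M,Q_S\rangle + \|\mathbf{c}\|^2$ by a slightly heavier route, lifting to an $(n+1)$-dimensional linear system $2A(T\oplus 1)\mathbf{y}'=\mathbf{b}$ and reading off the last coordinate of $\mathbf{y}'=(1/2)(T\oplus 1)^{-1}A^{-1}\mathbf{b}$, using the fact that the bottom row of $A^{-1}$ is precisely the vector of barycentric coordinates $(\alpha_i)$; your direct weighted sum shortcuts that inversion. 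For the $\|\mathbf{c}\|=O(\|M\|)$ bound, your pairwise-subtraction argument is a reasonable substitute for the paper's implicit reading-off of the first $n$ coordinates of $\mathbf{y}'$; both implicitly need that the singular values of $T$ stay bounded away from $0$ and $\infty$, which does follow from $T^TT=\idd+M$ near $\idd$ (or, if one prefers, one can WLOG replace $T$ by its positive polar factor $\sqrt{\idd+M}$ since an orthogonal factor does not change $\crd(TS)$). You correctly identify the error term as exactly $\|\mathbf{c}\|^2\geq 0$, matching the paper's non-negativity remark.
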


\begin{proof}
Let $\mathbf{x}_i$, $i=1,\ldots,n+1$, be the vertices of $S$.
The center $\mathbf{y}$ and radius $R=\sqrt{1+a}$ of the circumsphere of $TS$
are determined by the $n+1$ equations $||T(\mathbf{x}_i)-\mathbf{y}||^2 = 1 + a$,
$i=1,\ldots,n+1$. Defining the $n+1$-element vector $\mathbf{y}'$ whose first
$n$ elements give $\mathbf{y}$ and its last elements is $\tfrac{1}{2}(a-||\mathbf{y}||^2)$,
we can write the system of equations as a linear one:
$2A(T\oplus 1)\mathbf{y}'=\mathbf{b}$ where
$$A = \left(\begin{array}{ccccc}
x_{11}&x_{12}&\cdots&x_{1n}&1\\
x_{21}&x_{22}&\cdots&x_{2n}&1\\
\vdots&\vdots&\ddots&\vdots&\vdots\\
x_{(n+1)1}&x_{(n+1)2}&\cdots&x_{(n+1)n}&1\end{array}\right)\text{,}$$
$$b_i = \left(||T(\mathbf{x}_i)||^2-1\right) = \langle \mathbf{x}_i, M\mathbf{x}_i\rangle\text,$$
and $T\oplus 1$ is the direct sum of $T$ with the $1\times1$ unit matrix.
We can easily recover the circumcenter and radius by inverting
the linear system: $\mathbf{y}' = (1/2)(T\oplus1)^{-1}A^{-1}\mathbf{b}$.
Clearly then,
$$a = (a-||\mathbf{y}||^2) + ||\mathbf{y}||^2 =
\langle\mathbf{c},\mathbf{b}\rangle+O(\eps^2)\text,$$
where $\mathbf{c}$ is the bottom row of $A^{-1}$.
By definition, the elements of $\mathbf{c}$ satisfy
$\sum_{i=1}^{n+1}c_i x_{ij}=0$ for $j=1,\ldots,n$
and $\sum_{i=1}^{n+1}c_i=1$, so these are the same
coefficients $\alpha_i$ of \eqref{alpheqn}. In summary, we have that
$$\begin{aligned}R^2 &= 1+a = 1 + \sum_{i=1}^{n+1}\alpha_i \langle \mathbf{x}_i, M\mathbf{x}_i\rangle + O(||M||^2)\\
&= 1 + \langle M,Q_S\rangle+O(||M||^2)\text,\end{aligned}$$
and the error term, given by $||\mathbf{y}||^2$, is non-negative.
\end{proof}

Let $X$ be a symmetric set of simplices inscribed in $S^{n-1}$ (that is,
the simplex $S$ is in $X$ if and only if $-S$ is too). If there are non-negative
numbers $\upsilon_S$, $S\in X$, such that
\begin{equation}\label{semieut}
    \upsilon_S = \upsilon_{-S} \text{ and } \idd = \sum_{S\in X} \upsilon_S Q_S\text,
\end{equation}
then we say that $X$ is semieutactic.
We say that a set of simplices $S$ is redundantly semieutactic
if the set $X\setminus\{S,-S\}$ is semieutactic for all $S\in X$.

In dimensions $n=2,3,4,$ and $5$, the critical covering lattice for $B^n$ is known,
and it is unique up to rotation. In all of these cases, the lattice is $A_n^*$, the
reciprocal of the root lattice $A_n$ \cite{SPLAG}.
The lattice $A_n^*$ can be embedded isometrically as a rescaled $n$-dimensional
sublattice of the $(n+1)$-dimensional integer lattice $\mathbb{Z}^{n+1}$ as follows:
\begin{equation}
    A_n^* \simeq \lbrace a^{1/2} \mathbf{z} : \mathbf{z}\in\mathbb{Z}^{n+1}, z_i-z_j\in (n+1) \mathbb{Z}, \sum_{i=1}^{n+1}z_i = 0\rbrace\text,
\end{equation}
where $a = 2/{\binom{n+2}{3}}$. The Voronoi polytope of $A_n^*$, also known as
the permutohedron, is the convex hull of the point $\mathbf{x}_0=a^{1/2} (-n/2,-n/2+1,\ldots,n/2)$
and all $(n+1)!$ points derived from it by coordinate permutations. Every vertex
of the permutohedron is related by lattice translations to exactly $n$ other
vertices, and so $A_n^*$ is
a simple lattice and there are $n!$ primitive simplices, all maximal, which divide into
$n!/2$ origin-symmetric pairs. One of the primitive simplices $S_0$ is given by
the $n+1$ cyclic permutations of $\mathbf{x}_0$, and the matrix giving the map $Q_{S_0}$
is a circulant matrix. The symmetric maps associated
with the other simplices are given by simultaneous permutations of the rows and columns of $Q_{S_0}$.
Note that the maps are described by $(n+1)\times(n+1)$ matrices, but they preserve
the linear space spanned by our lattice in $\mathbb{R}^{n+1}$ and act as the zero map
on the orthogonal complement. Therefore, the identity
map we are looking for is not given by the identity matrix, but by the circulant
matrix whose first row is $(\tfrac{n}{n+1},-\tfrac{1}{n+1},\ldots,-\tfrac{1}{n+1})$.

Since by symmetry $\sum_{S\in X(A_n^*)}\tfrac{1}{(n-1)!}Q_S$ must be circulant with
all off-diagonal elements equal, and since its trace is $n$, it must be the identity map.
Therefore, $\mathbf{u}_0 = (1/(n-1)!)_{S\in X(A_n^*)}$ is a solution to
to \eqref{semieut}, and so $X(A_n^*)$ is semieutactic
for all $n\ge2$. Note that the linear system \eqref{semieut} has
more equalities than variables for $n\le3$ and more variables than equalities for $n\ge4$.
Therefore, for $n\ge4$ the solution space must contain a line $\mathbf{u}_0 + t \mathbf{u}_1$,
and this line must contain a non-negative solution where at least one coefficient
$\upsilon_{S}$ vanishes. Therefore, $X\setminus\{S,-S\}$ is semieutactic and
$X$ is not redundantly semieutactic. On the other hand, it is easy to check
that for $n=2$ and $n=3$, the solution to \eqref{semieut} is unique.

We can now prove three results relating these eutaxy
properties with the existence or nonexistence of
certain covering lattices for certain bodies. The first
is a sufficient condition (and necessary under the assumption of simplicity), originally proved by
Barnes and Dickson, for a lattice to be extreme for $B^n$.

\begin{thm}\label{extthm}\emph{(Barnes and Dickson \cite{barnesdickson})}
Let $\Lambda$ be a simple lattice such that the circumradius
of its maximal primitive simplices is $1$. The following are equivalent:
\begin{enumerate}
\item $\Lambda$ is extreme for $B^n$;
\item $X(\Lambda)$ is semieutactic.
\end{enumerate}
\end{thm}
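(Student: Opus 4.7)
The plan is to parametrize small perturbations of $\Lambda$ by linear maps $T$ near the identity and study the first-order behavior of both $\det(T)$ and $\mu(T\Lambda)$ in the inner-product space $\mathrm{Sym}^n$. Since $\Lambda$ is generic, for $T$ close to $\idd$ the combinatorial type of the Voronoi honeycomb is preserved, so the primitive simplices of $T\Lambda$ are translates of $TS$ as $S$ ranges over the primitive simplices of $\Lambda$. The non-maximal primitive simplices have circumradius strictly less than $1$, so for $T$ sufficiently close to $\idd$ they remain below $1$, and thus $\mu(T\Lambda) = \max_{S \in X(\Lambda)} \crd(TS)$. Setting $M = T^T T - \idd$, Lemma \ref{crlem} gives $\crd(TS)^2 = 1 + \langle M, Q_S \rangle + O(\|M\|^2)$ with non-negative error.

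For the direction $(2) \Rightarrow (1)$, suppose $\idd = \sum_{S \in X(\Lambda)} \upsilon_S\, Q_S$ with $\upsilon_S \ge 0$. The crucial observation is that the error in Lemma \ref{crlem} is not merely $O(\|M\|^2)$ but actually non-negative, so that $\crd(TS)^2 \ge 1 + \langle M, Q_S \rangle$ with no error, which yields
\[
\sum_S \upsilon_S \bigl(\crd(TS)^2 - 1\bigr) \;\ge\; \sum_S \upsilon_S \langle M, Q_S \rangle \;=\; \langle M, \idd \rangle \;=\; \trc M .
\]
If $\det T > 1$, AM--GM applied to the singular values of $T$ gives $\trc(T^T T) \ge n(\det T)^{2/n} > n$, so $\trc M > 0$ and hence $\crd(TS) > 1$ for at least one $S \in X(\Lambda)$ with $\upsilon_S > 0$. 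Thus no nearby lattice with larger determinant can cover $B^n$, so $\Lambda$ is extreme.

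For $(1) \Rightarrow (2)$, I argue by contrapositive. Assume $\idd$ does not lie in the closed convex conic hull $C$ generated by $\{Q_S : S \in X(\Lambda)\}$; since $X(\Lambda)$ is finite, $C$ is closed, and the hyperplane separation theorem produces $M_0 \in \mathrm{Sym}^n$ with $\trc M_0 > 0$ and $\langle M_0, Q_S \rangle \le 0$ for every $S \in X(\Lambda)$. Using that $\trc Q_S = \sum_j \alpha_j \|\mathbf{x}_j\|^2 = 1$ (since the vertices of $S$ lie on the unit sphere and the barycentric coefficients sum to $1$ by \eqref{alpheqn}), the perturbation $M_0 \mapsto M_0 - \eps\, \idd$ converts the non-strict inequalities to strict ones while preserving $\trc M_0 > 0$ for $\eps$ small. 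Setting $T_t = \idd + (t/2)M_0$, one has $T_t^T T_t - \idd = tM_0 + O(t^2)$, so Lemma \ref{crlem} gives $\crd(T_t S)^2 \le 1 - ct + O(t^2) < 1$ uniformly in $S \in X(\Lambda)$ for small $t > 0$, while $\det T_t = 1 + (t/2)\trc M_0 + O(t^2) > 1$. Thus $T_t \Lambda$ is a covering lattice for $B^n$ of strictly larger determinant, contradicting extremality.

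The main obstacle is the $(2) \Rightarrow (1)$ direction: if one had only the bare first-order estimate, the quadratic error could conceivably outweigh the linear term whenever the latter is small. What makes the argument close is the one-sided nature of the error in Lemma \ref{crlem}, which upgrades the asymptotic expansion to the clean inequality $\crd(TS)^2 \ge 1 + \langle M, Q_S\rangle$; this is exactly the input the weighted-sum argument needs, and after that both implications follow from elementary convex duality.
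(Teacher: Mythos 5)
Your proof is correct, and it differs from the paper's in one genuine way. The paper proves both implications by contrapositive: it shows that failure of semi-eutaxy yields a covering lattice of larger determinant (giving (1)$\Rightarrow$(2)), and that non-extremality yields a separating map $M$ witnessing failure of semi-eutaxy (giving (2)$\Rightarrow$(1)). Your (1)$\Rightarrow$(2) argument is essentially the paper's: both hinge on a Farkas-type alternative, the only cosmetic difference being that the paper invokes a Stiemke--Gordan statement that hands over strict inequalities $\langle M, Q_S\rangle<0$ directly, while you obtain weak inequalities from conic separation and then perturb by $-\eps\,\idd$ (using $\trc Q_S = 1$) to make them strict --- two standard ways to the same place. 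Your (2)$\Rightarrow$(1) direction, however, is a genuinely different and arguably cleaner route: instead of the paper's ``suppose not extreme, extract $M$ with $\langle M, Q_S\rangle\le 0$, massage to strictness, contradict semi-eutaxy,'' you argue \emph{directly} that semi-eutaxy forces extremality by noting that the error in Lemma~\ref{crlem} is one-sided, so $\crd(TS)^2\ge 1+\langle M,Q_S\rangle$ holds as a clean inequality; summing against the eutaxy coefficients and using AM--GM on the singular values of $T$ gives $\sum_S\upsilon_S(\crd(TS)^2-1)\ge\trc M>0$ whenever $\det T>1$, so some maximal simplex must pop out of the unit ball. Both proofs rely essentially on the non-negativity of the error term in Lemma~\ref{crlem}; the paper uses it to read off the sign of $\langle M, Q_S\rangle$ in its contrapositive, while you use it to upgrade the expansion to an unconditional inequality. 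The direct version makes it transparent that the quadratic remainder cannot defeat the linear term, which is exactly the potential gap you correctly flagged as the main obstacle.
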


\begin{proof}
Suppose first that $X(\Lambda)$ is not semieutactic. By the fundamental theorem of linear algebra
and the fact that a subspace of $\mathbb{R}^m$ does not contain a non-zero non-negative vector if and only if
its orthogonal complement contains a positive vector, we conclude that there exists a symmetric map $M$
such that $\langle M,Q_S\rangle<0$ for all $S\in X(\Lambda)$ and $\trc M>0$. Let $T_\eps=\sqrt{\idd+\eps M}$,
where the square root $\sqrt{A}$ of a positive definite symmetric map $A$ is meant to denote the unique
positive definite map $B$ such that $B^2=A$. From Lemma \ref{crlem}, as long as $\eps$ is small enough, $\crd(T_\eps S)<1$
for all $S\in X(\Lambda)$ and $S+\mathbf{t}_S\subseteq B^n$.
Therefore $\Lambda$ is a covering lattice of $T_\eps^{-1} B^n$.
Equivalently, $T_\eps\Lambda$ is a covering lattice of $B^n$. Also, 
$\trc M>0$ implies that $\det T_\eps>1$ for small enough $\eps$, so $\Lambda$ is not extreme.

Conversely, suppose that $\Lambda$ is not extreme. Then for arbitrarily
small $\eps$ there exists a map $T$ satisfying $||T-\idd||<\eps$, $\det T>1$,
and $cr(S)\le1$ for all primitive simplices $S$ of $T\Lambda$.
Since the primitive simplices of $T\Lambda$ are just the images under $T$ of the primitive
simplices of $\Lambda$, we have from Lemma \ref{crlem} that $\langle M,Q_S\rangle\le0$
for all $S\in X(\Lambda)$, where $M=T^T T-\idd$. Moreover, since $\det T>1$, we have that $\trc M>0$.
Note that the map $M' = M - \dfrac{\trc M}{2n}\idd$ satisfies $\trc M'>0$ and $\langle M,Q_S\rangle<0$
for all $S\in X(\Lambda)$. Again, from the fundamental theorem of linear algebra,
the existence of such a map $M'$ implies that $X(\Lambda)$ is not semieutactic.
\end{proof}

Moreover, by compactness
we have that for all $\delta>0$, there exist $\eps>0$, such that whenever
$d(\Lambda)>d_{B^n}-\eps$ and $\Lambda$ is a covering lattice for $B^n$, then there is a lattice $\Lambda_0$, critical
for $B^n$, such that $\delta(\Lambda,\Lambda_0)<\delta$. 
Also, whenever $\Lambda$ is a critical lattice for a nearly spherical body $K$ satisfying
$(1-\eps)B^n\subset K\subset(1+\eps)B^n$, then there is a lattice $\Lambda_0$, critical
for $B^n$, such that $\delta(\Lambda,\Lambda_0)<\delta$. 
In cases where the critical covering lattice for $B^n$ is unique up to rotations and simple,
we can prove the following necessary and sufficient condition for $B^n$ to be extensible.

\begin{thm}Let $\Lambda_0$ be the unique (up to rotation) critical covering lattice of $B^n$,
and let $\Lambda_0$ be simple. Then the following are equivalent:
\begin{enumerate}
\item $B^n$ is extensible;
\item $X(\Lambda_0)$ is redundantly semieutactic.
\end{enumerate}
\end{thm}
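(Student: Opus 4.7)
The plan is to parallel the proof of Theorem \ref{extthm}, incorporating two new ingredients: (i) the translation freedom $\mathbf{t}_S$ from Lemma \ref{trlem}, which turns the circumradius inequality $\langle M, Q_S\rangle \le 0$ into an inequality whose right-hand side depends on the radial function of $K'$, and (ii) the freedom to rotate $\Lambda_0$, since any rotation $R\Lambda_0$ is also a critical covering lattice of $B^n$. Correspondingly, $d_{K'}=d_{B^n}$ for a near-spherical $K'$ will be equivalent to a linearized criticality condition holding at $TR\Lambda_0$ for every $R\in SO(n)$, not just at $\Lambda_0$ itself.

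First I would linearize $TS'\sbt K'$ for a simplex $S'$ inscribed in $B^n$ and a nearly-spherical $K'$ with radial function $r_{K'}(\mathbf{v})=1+\rho(\mathbf{v})$, $\rho\ge 0$ small. The condition is the existence of $\mathbf{t}$ with $||T\mathbf{x}_{S',i}+\mathbf{t}||^2\le(1+\rho(\mathbf{x}_{S',i}))^2$ for all $i$. Applying Farkas's Lemma to this system (linear in $\mathbf{t}$ to first order) and using that the positive barycentric coefficients $\alpha_i$ from \eqref{alpheqn} give the unique nonnegative null-combination of $(\mathbf{x}_{S',i})_i$, I obtain $\langle M,Q_{S'}\rangle \le 2\sum_i \alpha_i \rho(\mathbf{x}_{S',i})$, where $M=T^TT-\idd$. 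Writing $S'=RS$ and using $Q_{RS}=RQ_SR^{-1}$, this becomes $\langle R^{-1}MR, Q_S\rangle \le \psi_{R,S}$ for every $S\in X(\Lambda_0)$, where $\psi_{R,S}:=2\sum_i \alpha_{S,i}\rho(R\mathbf{x}_{S,i})\ge 0$. A second Farkas argument, exactly as in Theorem \ref{extthm}, shows that (for a fixed $R$) no such $M$ with $\trc M>0$ exists iff there are eutaxy coefficients $\upsilon_S\ge 0$ with $\sum \upsilon_S Q_S=\idd$ whose support lies in $X^R:=\{S\in X(\Lambda_0):R\mathbf{x}_{S,i}\notin U\text{ for all }i\}$, where $U:=\{\mathbf{v}\in S^{n-1}:\rho(\mathbf{v})>0\}$ is the bulge of $K'$. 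Combining this with uniqueness of $\Lambda_0$ up to rotation and a standard compactness argument on covering lattices, I get that $d_{K'}=d_{B^n}$ iff $X^R$ is semi-eutactic for every $R\in SO(n)$.

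With this characterization the two implications become short. For $(2)\Rightarrow(1)$, take any $\mathbf{u}\in S^{n-1}$ and set $K'=\cnv(B^n\cup\{\pm(1+\delta)\mathbf{u}\})$ for a small $\delta>0$; then $K'\supsetneq B^n$, and for each $R$ the small neighborhood $R^{-1}U$ of $\pm R^{-1}\mathbf{u}$ contains either no vertex of any simplex in $X(\Lambda_0)$ (so $X^R=X(\Lambda_0)$, semi-eutactic by Theorem \ref{extthm}) or exactly one vertex pair $\{\pm\mathbf{x}_0\}$ belonging to a unique pair $\{S_0,-S_0\}$ (so $X^R=X(\Lambda_0)\setminus\{S_0,-S_0\}$, semi-eutactic by hypothesis), yielding $d_{K'}=d_{B^n}$. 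For $(1)\Rightarrow(2)$, given any witnessing extension $K'\supsetneq B^n$ (replaced if necessary by a near-spherical body between $B^n$ and $K'$), the bulge $U$ is a nonempty open subset of $S^{n-1}$, so for each $S_0\in X(\Lambda_0)$ and vertex $\mathbf{x}_0$ of $S_0$ I choose $R\in SO(n)$ with $R\mathbf{x}_0\in U$; then $\{S_0,-S_0\}\cap X^R=\emptyset$, so $X^R\subseteq X(\Lambda_0)\setminus\{S_0,-S_0\}$, and semi-eutaxy of $X^R$ passes to its superset.

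The main obstacle is the linearization step: one must verify that the optimal translation $\mathbf{t}$ in Lemma \ref{trlem} remains small uniformly when $K'$ is near-spherical and $T$ is near identity, so that the second-order terms in $\mathbf{t}$ and $M$ can be collected and discarded without spoiling the Farkas step, and that the passage from the local linearized criticality at each $R\Lambda_0$ to the global equality $d_{K'}=d_{B^n}$ can be justified via uniqueness of the critical lattice of $B^n$ up to rotation together with a compactness argument in the space of lattices of bounded determinant. Once these are in place, it is precisely the quantification over $R\in SO(n)$ that upgrades semi-eutaxy of a single $X^R\subsetneq X(\Lambda_0)$ to redundant semi-eutaxy across every $S_0$.
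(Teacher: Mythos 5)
Your plan follows the same skeleton as the paper's proof---augmented ball, Farkas, rotation of $\Lambda_0$, stability of near-critical covering lattices---but repackages it around an explicit criterion, ``$X^R$ is semi-eutactic for every $R\in SO(n)$,'' that the paper never states. Unwinding, your $(2)\Rightarrow(1)$ is the paper's second half rephrased (for a small symmetric cap each $X^R$ is either all of $X(\Lambda_0)$ or $X(\Lambda_0)\setminus\{\pm S_0\}$, hence semi-eutactic), and your $(1)\Rightarrow(2)$ is the contrapositive of the paper's first half, where the paper instead builds a better covering lattice for $B_\varepsilon$ with the pole placed at a vertex of the non-redundant $S_0$ and then uses $B_\varepsilon\subset K$ for every proper superset $K$. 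Your reformulation is cleaner and more symmetric, and it usefully isolates what varies with the rotation; what it costs is that the delicate estimates are now hidden inside the blanket ``iff'' characterization, which the paper never proves in that generality.

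The gap you flag at the end is the real one, and it is worth being precise about how the paper closes the analogous gap. In the direction ``$d_{K'}=d_{B^n}\Rightarrow X^R$ semi-eutactic,'' the paper's crucial device is that the error term in Lemma~\ref{crlem} is \emph{non-negative}, so for the simplices whose (rotated) vertices stay clear of the bulge one gets the \emph{exact} inequality $\langle M,Q_S\rangle\le0$ from $\crd(TS)\le 1$, not merely a first-order inequality; it is this exact one-sidedness that lets the eutaxy coefficients of $X(\Lambda_0)\setminus\{\pm S_0\}$ force $\trc M\le 0$ and hence $\det T\le 1$. Your uniform linearization $\langle R^{-1}MR,Q_S\rangle\le\psi_{R,S}$ does not have this one-sided error structure when $\psi_{R,S}>0$, and the set $X^R$ defined by $R\mathbf{x}_{S,i}\notin U$ does not exclude boundary cases; you would want to mimic the paper and argue (as it does for $B_\varepsilon$) that for $\varepsilon$ small only one $\pm$ pair of simplices can escape $B^n$ at all, and apply the exact circumradius inequality to the rest. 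For the converse direction your Farkas construction of $M$ needs the tangent estimate in the proof of Theorem~\ref{detthm} to actually translate the bulging simplex inside $K'$, and the passage from the local statement at each $R\Lambda_0$ to the global equality of critical determinants is precisely the stability paragraph in the middle of the paper's proof. With those three ingredients supplied, your plan is correct and amounts to a somewhat more modular rewriting of the paper's argument.
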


\begin{proof}First let us assume that $X(\Lambda_0)$ is not redundantly semieutactic.
That is, we assume that there is a maximal primitive simplex $S_0\in X(\Lambda_0)$
such that $X(\Lambda_0)\setminus \{\pm S_0\}$ is not semieutactic.
Consider the $\eps$-symmetrically augmented ball
$B_\varepsilon=\cnv (B^n,\pm(1+\eps)\mathbf{p})$,
where $\mathbf{p}\in S^{n-1}$. 
We are free to assume that $\Lambda_0$ is rotated so that $\mathbf{p}$ is one of the vertices of $S_0$.
By exactly the same argument as in the proof of Theorem \ref{extthm}, we conclude that
there exists a linear map $T$ such that $\det T>1$, $\crd(TS)<1$ for all $S\in X(\Lambda_0)\setminus\{\pm S_0\}$,
and $||T-\idd||$ is arbitrarily small. In fact, if $||T-\idd||$ is small enough,
then $TS_0$ can be translated so as to lie within $B_\eps$ and therefore $T\Lambda_0$ is
a covering lattice of $B_\eps$ by Lemma \ref{trlem}.
Since $d(T\Lambda_0)>d(\Lambda_0)$, and since for each proper $K\supset B^n$, there exists $\varepsilon>0$ such that
$K\supset B_\varepsilon\supset B^n$, it follows that $B^n$ is inextensible.

Consider now a critical lattice $\Lambda$ of $B_\eps$, and note that for
any $\delta>0$, we may choose $\eps>0$ and $\Lambda_0$, a critical lattice
of $B^n$, such that $\delta(\Lambda,\Lambda_0)<\delta$.
We assume that $X(\Lambda_0)$ is redundantly semieutactic. 
Since $\Lambda = T\Lambda_0$ is
a covering lattice of $B_\varepsilon$, then by Lemma \ref{trlem},
$TS+mathbf{t}_S\subseteq B_\varepsilon$ for all $S\in X(\Lambda_0)$, and if $\eps$
is small enough then $TS+mathbf{t}_S\subseteq  B^n$ for all but one pair $S=\pm S_0$.
Since $\Lambda_0$ is redundantly semieutactic, the requirement that $\crd(TS)\le1$
whenever $S\in X(\Lambda_0)\setminus\{\pm S_0\}$ necessarily
implies, when $||T-\idd||$ is small enough, that $\det T\le1$.
Of course, since $d_{B_\varepsilon}\ge d_{B^n}$, we must have
$d_{B_\varepsilon}= d_{B^n}$, and $B^n$ is extensible.
\end{proof}

\begin{cor}$B^n$ is extensible for $n=4$ and $5$, and inextensible for $n=2$ and $3$.\end{cor}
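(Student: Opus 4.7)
The plan is to apply the preceding theorem, which identifies extensibility of $B^n$ with the redundant semi-eutaxy of $X(A_n^*)$, since $A_n^*$ is the unique (up to rotation) critical covering lattice in each dimension $n=2,3,4,5$, and is generic. The problem then becomes a finite check for each $n$: does removing any pair $\{S_0,-S_0\}$ from $X(A_n^*)$ leave a semi-eutactic set? I would first enumerate $X(A_n^*)$ explicitly. Modeling $A_n^*$ in the hyperplane $\{x\in\mathbb{R}^{n+1}:\sum x_i=0\}$, the Voronoi polytope is the permutohedron with $(n+1)!$ simple vertices, giving $n!$ primitive simplices modulo translation, paired as $\{S,-S\}$. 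The symmetry group of $A_n^*$ (the Weyl group $S_{n+1}$ together with $-\idd$) acts transitively on $X(A_n^*)$, and $Q_S$ for a representative $S$ is computable from \eqref{alpheqn} once the barycentric coordinates of the origin in $S$ are found.

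For $n=2$ there is a single pair, so removing it leaves the empty set, vacuously not semi-eutactic; hence $B^2$ is inextensible. For $n=3$ there are three pairs, cyclically permuted by the symmetry group of $A_3^*$. By transitivity the eutaxy coefficients of the full set are equal, so $\idd$ is a positive multiple of $Q_{S_1}+Q_{S_2}+Q_{S_3}$. A direct computation of the primitive BCC tetrahedron shows that no single $Q_{S_i}$ is a scalar multiple of $\idd$, and combined with the transitive symmetry this forces linear independence of the three $Q_{S_i}$ in $\mathrm{Sym}^3$. Thus $\idd$ has a unique representation in their linear span with all three coefficients nonzero, so no combination of just two of them can equal $\idd$; hence $X(A_3^*)$ is not redundantly semi-eutactic and $B^3$ is inextensible.

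For $n=4,5$ there are $12$ and $60$ pairs respectively, well in excess of $\dim\mathrm{Sym}^n=10,15$, so substantial redundancy is available. To show that $X(A_n^*)\setminus\{\pm S_0\}$ is semi-eutactic, I would exploit the stabilizer of $\{\pm S_0\}$ in the lattice symmetry group to partition the remaining simplices into orbits, assign a single nonnegative coefficient to each orbit, and solve the resulting low-dimensional linear system. The averaged sum $\sum\upsilon_S Q_S$ is automatically invariant under the stabilizer, so if the stabilizer acts irreducibly on $\mathbb{R}^n$ it must be a scalar multiple of $\idd$, and the only remaining questions are the sign of the scalar and the positivity of the orbit coefficients.

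The main obstacle is the honest computational work in dimensions $4$ and $5$: enumerating the stabilizer orbits on $X(A_n^*)\setminus\{\pm S_0\}$, computing the corresponding orbit-sum matrices, verifying that the stabilizer's action decomposes $\mathrm{Sym}^n$ favourably (irreducibility on $\mathbb{R}^n$ is not automatic and must be checked), and confirming that the resulting linear system admits a strictly positive solution. For $n=3$, by contrast, the failure of redundant semi-eutaxy follows cleanly once linear independence of the three $Q_{S_i}$ is verified, but this verification itself requires computing the barycentric coordinates of the origin in the primitive BCC tetrahedron explicitly.
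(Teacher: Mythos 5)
Your approach matches the paper's: reduce via the preceding theorem to checking whether $X(A_n^*)$ is redundantly semi-eutactic, and then carry out a finite computation with the $n!/2$ symmetric maps $Q_S$ associated to the primitive simplices of the permutohedron. The paper states this tersely (``the \ldots eutaxy coefficients can easily be calculated \ldots one can therefore verify''); you add useful structure by exploiting the transitive symmetry. One small point on your $n=3$ argument: the step from ``no single $Q_{S_i}$ is a scalar multiple of $\idd$'' to ``the three $Q_{S_i}$ are linearly independent'' is not automatic from transitivity alone --- the permutation representation of $S_3$ on $\mathbb{R}^3$ has two proper invariant subspaces, so the kernel of $(c_1,c_2,c_3)\mapsto\sum c_iQ_{S_i}$ could a priori be the trivial line $\langle(1,1,1)\rangle$, giving $Q_{S_1}+Q_{S_2}+Q_{S_3}=0$ with no $Q_{S_i}$ proportional to $\idd$. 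You implicitly rule this out because $X(A_3^*)$ is semi-eutactic (Barnes--Dickson applied to the known optimality of $A_3^*$), so $Q_{S_1}+Q_{S_2}+Q_{S_3}$ is a nonzero multiple of $\idd$; it would be worth making that step explicit. For $n=4,5$ your orbit-under-stabilizer plan is sensible, but, as you acknowledge, the irreducibility of the stabilizer action and the positivity of the resulting solution still require honest verification; this is the same level of detail the paper itself leaves to the reader.
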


\begin{proof}
Recall that $A_n^*$ is the unique critical lattice for $B^n$ in dimensions $n=2,3,4,$
and $5$ up to rotations. Recall also that $X(A_2^*)$ and $X(A_3^*)$ have a unique, positive
solution to the eutaxy equation \eqref{semieut}, and are therefore not redundantly semieutactic,
and that $X(A_4^*)$ and $X(A_5^*)$ are redundantly semieutactic.
\end{proof}

We now focus on the case where $B^n$ is inextensible.
Particularly, we will assume that the critical lattice $\Lambda_0$ is
unique, simple, and has a unique set of eutaxy coefficients satisfying
\eqref{semieut}. This is the case for $n=2$ and $3$.

\begin{thm}\label{detthm}Let $\Lambda_0$ be the unique critical covering lattice of $B^n$,
and let $\Lambda_0$ be simple and $X(\Lambda_0)=\{S_1,\ldots,S_{2m}\}$ be semieutactic
with unique eutaxy coefficients $\upsilon_i$ be  such that $\sum_{i=1}^{2m}\upsilon_i Q_{S_i}=\idd$.
For each simplex $S_i$, denote by $\mathbf{x}_{ij}$, $j=1,\ldots,n+1$, its vertices and
by $\alpha_{ij}$ the corresponding barycentric coordinates of the circumcenter of $S_i$ (see \eqref{alpheqn}).
Let $K$ be a nearly spherical body
$(1-\eps)B^n\subseteq K\subseteq (1+\eps)B^n$, and
let $r_{ij}=1+\rho_{ij}$ be the values
of the radial distance function of $K$ evaluated at the directions $\mathbf{x}_{ij}$, $i=1,\ldots,2m$, $j=1,\ldots,n+1$.
There exists a covering lattice $\Lambda'$ of $K$ whose determinant is bounded as follows:
$$\frac{d(\Lambda')}{d(\Lambda_0)} \ge 1 + \sum_{i=1,j=1}^{n+1,2m}\upsilon_i\alpha_{ij}\rho_{ij} - \eps' \sum_{i=1,j=1}^{n+1,2m}|\rho_{ij}|\text,$$
where $\eps'$ depends on $\eps$ and becomes arbitrarily small as $\eps\to0$.
\end{thm}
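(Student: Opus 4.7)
The plan is to construct $\Lambda' = T\Lambda_0$ with $T = (\idd+M)^{1/2}$ for a symmetric map $M$ of operator norm $O(\eps)$, so that $d(\Lambda')/d(\Lambda_0) = \det T = 1 + \tfrac12\trc M + O(\|M\|^2)$. The perturbed lattice remains generic for small $M$, with maximal primitive simplices $\{TS_i\}$, so by Lemma~\ref{trlem} it suffices to exhibit translations $\mathbf{t}_i$ with $T\mathbf{x}_{ij} + \mathbf{t}_i \in K$ for all $i, j$. Expanding
\[
\|T\mathbf{x}_{ij} + \mathbf{t}_i\|^2 = 1 + \langle M\mathbf{x}_{ij}, \mathbf{x}_{ij}\rangle + 2\langle \mathbf{t}_i, \mathbf{x}_{ij}\rangle + O(\eps^2)
\]
and using Lipschitz continuity of the radial function $r_K$ on $S^{n-1}$ (automatic since $K$ is convex with $0$ in its interior), the membership condition reduces, up to a quadratic error, to the linear inequality $\langle M\mathbf{x}_{ij}, \mathbf{x}_{ij}\rangle + 2\langle \mathbf{t}_i, \mathbf{x}_{ij}\rangle \le 2\rho_{ij}$.

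I will saturate these linearized inequalities with equality modulo a uniform slack $\delta = C\eps^2$ chosen to absorb the quadratic error. For fixed $M$, the $n+1$ equations on $\mathbf{t}_i$ have a single compatibility relation obtained by weighting the $j$th equation by $\alpha_{ij}$ and summing; using $\sum_j\alpha_{ij}\mathbf{x}_{ij} = 0$ and $\sum_j\alpha_{ij} = 1$, this reduces to the scalar equation
\[
\langle M, Q_{S_i}\rangle = 2\sum_{j=1}^{n+1}\alpha_{ij}\rho_{ij} - \delta.
\]
The task now reduces to finding $M \in \mathrm{Sym}^n$ satisfying these $2m$ scalar equations. Critical semi-eutaxy of $X(\Lambda_0)$ ensures that the only linear relations among $\{Q_{S_i}\}$ are those coming from the pair identifications $Q_S = Q_{-S}$: any other relation would permit a nontrivial perturbation of the $\upsilon_i$'s preserving the eutaxy identity, contradicting uniqueness. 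Consistency of the right-hand side under these identifications is automatic from $K = -K$ (which gives $\rho(-\mathbf{x}) = \rho(\mathbf{x})$) together with the invariance of the barycentric coordinates of the circumcenter under $\mathbf{x}\mapsto -\mathbf{x}$. Hence such $M$ exists.

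With $M$ and its corresponding $\mathbf{t}_i$'s in hand, Lemma~\ref{trlem} confirms that $\Lambda' = T\Lambda_0$ covers $K$, and the eutaxy identity yields
\[
\trc M = \langle M, \idd\rangle = \sum_{i=1}^{2m}\upsilon_i\langle M, Q_{S_i}\rangle = 2\sum_{i,j}\upsilon_i\alpha_{ij}\rho_{ij} - \delta\sum_i\upsilon_i.
\]
Inserting into $\det T = 1 + \tfrac12\trc M + O(\|M\|^2)$ gives $d(\Lambda')/d(\Lambda_0) \ge 1 + \sum_{i,j}\upsilon_i\alpha_{ij}\rho_{ij} - O(\eps^2)$. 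Since $\max_{i,j}|\rho_{ij}| \le \eps$, and since the claimed bound is trivial (by taking $\Lambda' = \Lambda_0$) whenever $\sum_{i,j}|\rho_{ij}|$ is not comparable to $\eps$, the error absorbs into the form $\eps'\sum_{i,j}|\rho_{ij}|$ with $\eps' = O(\eps)$. The main obstacle is the solvability step for $M$: it uses both critical semi-eutaxy of $X(\Lambda_0)$ (ruling out hidden linear relations among the $Q_{S_i}$'s) and the central symmetry of $K$ (making the right-hand side consistent with the pair identifications); the quantitative control of the linearization's second-order errors is secondary but also requires care.
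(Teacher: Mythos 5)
Your overall plan is the paper's plan: solve for a symmetric map $M$ via the critical semi-eutaxy of $X(\Lambda_0)$, use it together with per-simplex translations $\mathbf{t}_i$ to deform $\Lambda_0$, and read off the determinant from $\trc M$. Your solvability argument for $M$ (linear independence of the $Q_{S_i}$'s modulo the $Q_S=Q_{-S}$ identification, plus compatibility from $K=-K$) matches the paper's and is correct.

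The gap is in your error analysis, and it is not cosmetic. You treat the passage from the exact membership condition $T\mathbf{x}_{ij}+\mathbf{t}_i\in K$ to the linearized inequality as introducing an error that is ``quadratic,'' and you propose a uniform slack $\delta=C\eps^2$. But the dominant error term is not $O(\eps^2)$. The point $T\mathbf{x}_{ij}+\mathbf{t}_i$ lies in a direction that differs from $\mathbf{x}_{ij}$ by an angle $\gamma=O\bigl(\sum_{i'j'}|\rho_{i'j'}|\bigr)$, and the Lipschitz constant of $r_K$ for a body pinched between $(1-\eps)B^n$ and $(1+\eps)B^n$ is only $O(\sqrt{\eps})$ (it is controlled by $\tan\beta$ with $\beta=\cos^{-1}\tfrac{1-\eps}{1+\eps}\approx 2\sqrt{\eps}$). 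So the error from replacing $r_K$ at the displaced direction by $1+\rho_{ij}$ is $O\bigl(\sqrt{\eps}\sum|\rho_{ij}|\bigr)=O(\eps^{3/2})$, not $O(\eps^2)$. This error is still of the required form $\eps'\sum|\rho_{ij}|$ with $\eps'\to0$, so the construction can be salvaged, but you must track that the angular displacement is itself proportional to $\sum|\rho_{ij}|$ rather than quoting a rank estimate in $\eps$ alone. The paper handles this exactly with the tangent-line argument of Figure~\ref{tanfig}, producing the contraction factor $1-\delta$ with $\delta\le\eps'\sum|\rho_{ij}|$, $\eps'=O(\sqrt{\eps})$; you need that or an equivalent.

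Your fallback --- ``the claimed bound is trivial by taking $\Lambda'=\Lambda_0$ whenever $\sum|\rho_{ij}|$ is not comparable to $\eps$'' --- is wrong and cannot be used to paper over the error budget. $\Lambda_0$ has covering radius exactly $1$ and need not be a covering lattice of $K$: if any $\rho_{ij}<0$ the corresponding maximal primitive simplex need not fit (even up to translation) in $K$, so $\Lambda'=\Lambda_0$ is not an admissible choice, and even if it were, $d(\Lambda_0)/d(\Lambda_0)=1$ does not dominate $1+\sum\upsilon_i\alpha_{ij}\rho_{ij}-\eps'\sum|\rho_{ij}|$ when the $\rho_{ij}$'s are mostly positive. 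Once the Lipschitz estimate is done correctly, this fallback is also unnecessary.
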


\begin{proof}
We first prove the existence of a symmetric map $M$ and translation vectors $\mathbf{t}_i$, $i=1,\ldots,2m$ satisfying
$\trc M = \sum_{i=1,j=1}^{n+1,2m}\upsilon_i\alpha_{ij}\rho_{ij}$, and 
\begin{equation}
    \langle\mathbf{x}_{ij},M\mathbf{x}_{ij}+\mathbf{t}_i\rangle=\rho_{ij}\text{ for all }
    i=1,\ldots,2m\text{ and }j=1,\ldots,n+1\text.\label{treqn}
\end{equation}
Taking the sum $\sum_{j=1}^{n+1}\alpha_{ij}(\cdot)$ of both sides of \eqref{treqn},
we obtain
$$\sum_{j=1}^{n+1}\alpha_{ij}\langle \mathbf{x}_{ij},M\mathbf{x}_{ij}\rangle = \sum_{j=1}^{n+1}\alpha_{ij}\rho_{ij}\text.$$
Therefore, by the affine independence of the vertices of the simplex $S_i$, for fixed $i$ and $M$, a vector
$\mathbf{t}_i$ satisfying \eqref{treqn} for all $j=1,\ldots,n+1$ exists
if and only if $\sum_{j=1}^{n+1}\alpha_{ij}\rho_{ij} = \langle M,Q_{S_i}\rangle$. Let us denote
$\rho_{i}=\sum_{j=1}^{n+1}\alpha_{ij}\rho_{ij}$.

All that is left to do is to find a map $M$ such that $\langle M,Q_{S_i}\rangle =\rho_i$
for all $i=1,\ldots,2m$, and $\trc M = \sum_{i=1}^{2m}\upsilon_i\rho_i$. From the fact that
the eutaxy coefficients are unique (modulo the trivial degeneracy associated
with the fact that $Q_S=Q_{-S}$) and the fundamental theorem of linear algebra,
it is easy to see that such a map must exist regardless of the values of $\rho_{ij}$.
Moreover, the map $M$ and translations vectors $\mathbf{t}_i$ are unique and
depend linearly on $\rho_{ij}$.

\begin{figure}
    \begin{center}
	\includegraphics{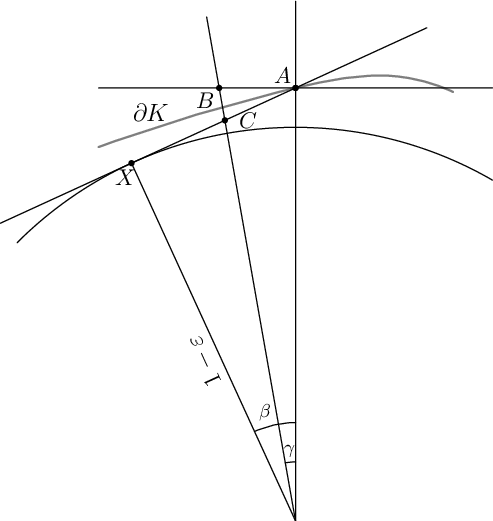}
	\caption{\label{tanfig}
	Illustration of the construction given in the proof
	of Theorem \ref{detthm} to bound the contraction factor needed
	to ensure that the original point $B$ when contracted to $C$
	lies inside the body $K$}
    \end{center}
\end{figure}

We now wish to find a contraction factor $1-\delta$ such that $||(1-\delta)\mathbf{y}_{ij}||\le r_K(\mathbf{y}_{ij}/||\mathbf{y}_{ij}||)$
for all $i,j$, where $\mathbf{y}_{ij}=(\idd+M)\mathbf{x}_{ij}+\mathbf{t}_i$. Therefore, for all $i,j$
we must have
$$\delta\ge\delta_{ij} = \frac{||\mathbf{y}_{ij}||-r_K(\mathbf{y}_{ij}/||\mathbf{y}_{ij}||)}{||\mathbf{y}_{ij}||}\text.$$
We wish to bound the values of $\delta_{ij}$ using only the values of the radial function evaluated at $\mathbf{x}_{ij}$ (not $\mathbf{y}_{ij}$)
and the fact that it is everywhere bounded between $1-\eps$ and $1+\eps$. We do this as illustrated in Figure \ref{tanfig}. In the
plane containing the origin $O$, the point $(1+\rho_{ij})\mathbf{x}_{ij}$ (denoted $A$ in the figure), and the point
$\mathbf{y}_{ij}$ (denoted $B$), draw the tangent $AX$ from $A$ to the circle of radius $1-\eps$ about the origin
in the direction toward $B$. Note that $B$ lies on the line through $A$ perpendicular to $OA$. Since $\rho_{ij}<\eps$,
the angle $\beta=\widehat{AOX}$ satisfies $\beta\le\cos^{-1}\dfrac{1-\eps}{1+\eps}\le2\sqrt{\eps}$. By convexity,
the segment $AX$ must lie in $K$. We mark the intersection of the tangent $AX$ and the ray $OB$ as $C$. Then
either $\delta_{ij}\le0$, or the boundary of $K$ intersects the ray $OB$ between $C$ and $B$. Since $\mathbf{y}_{ij}-\mathbf{x}_{ij}$
depends linearly on the values $\rho_{i'j'}$, the angle $\gamma=\widehat{AOB}$ satisfies $\gamma\le C\sum_{i'j'}|\rho_{i'j'}|$
for some constant $C$. By the law of sines we have
$$|BC|=\frac{|AB|\sin(\beta)}{\cos(\beta-\gamma)}\le\frac{(1+\epsilon)\gamma\beta}{1-\dfrac{1}{2}(\beta-\gamma)^2}\le(1+\eps)\eps'\sum_{i'j'}|\rho_{i'j'}|\text,$$
where $\eps'$ depends on $\eps$ and becomes arbitrarily small as $\eps\to 0$.
Therefore, if we let $\delta=\eps'\sum_{ij}|\rho_{ij}|$, then $\delta_{ij}\le\delta$ for all $i$ and $j$,
and for each simplex $S_i\in X(\Lambda_0)$, we now have that $(1-\delta)((\idd+M)S_i+\mathbf{t}_i)\subseteq K$.
Therefore, $\Lambda'=(1-\delta)(\idd+M)\Lambda_0$ is a covering
lattice for $K$.

The determinant of the lattice $\Lambda'$ is given by
$$\begin{aligned}\frac{d(\Lambda')}{d(\Lambda_0)} &= (1-\delta)^n \det(\idd+M)\\
&\ge\left(1+\sum_{i=1,j=1}^{2m,n+1}\upsilon_i\delta_{ij}\rho_{ij}-C\sum_{ij}|\rho_{ij}|^2\right)\left(1-\eps'\sum_{ij}|\rho_{ij}|\right)^n\\
&\ge1 + \sum_{i=1,j=1}^{n+1,2m}\upsilon_i\delta_{ij}\rho_{ij} - \eps'' \sum_{i=1,j=1}^{n+1,2m}|\rho_{ij}|\text,\end{aligned}$$
where the quadratic and higher order terms have been absorbed into the last term.
\end{proof}

\section{The case $n=3$}

We now turn to prove the main result, which is that the 3-dimensional
ball is locally pessimal. Given Theorem \ref{detthm}, the proof
proceeds much as the proof of Theorem 5 of Ref. \cite{kalluspack} does.
As in Ref. \cite{kalluspack}, we start with three lemmas, of which we will only prove
the first here, since it is the only one which varies significantly from
its analog in Ref. \cite{kalluspack}.

\begin{lem}\label{lemcl}Let 
$$c_l=P_l(1)+3P_l(\tfrac{4}{5})+P_l(\tfrac{3}{5})+4P_l(\tfrac{2}{5})+2P_l(\tfrac{1}{5})+P_l(0)\text,$$
where $P_l(t)$ is the Legendre polynomial of degree $l$.
Then $c_l=0$ if and only if $l=2$. Moreover, $|c_l-1|<C l^{-1/2}$ for some constant
$C$.\end{lem}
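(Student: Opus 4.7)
The plan is to prove the two claims in the following order: first the asymptotic bound $|c_l - 1| < Cl^{-1/2}$, and then the characterization that $c_l = 0$ if and only if $l = 2$.

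For the bound, I would appeal to the classical pointwise estimate $|P_l(t)| \le \sqrt{2/(\pi l \sqrt{1-t^2})}$ valid on $-1 < t < 1$, which is a consequence of Laplace's asymptotic formula for Legendre polynomials. Since $P_l(1) = 1$, the first term of $c_l$ cancels the $1$, leaving
$$c_l - 1 = 3P_l(4/5) + P_l(3/5) + 4P_l(2/5) + 2P_l(1/5) + P_l(0).$$
All five of these evaluation points lie strictly inside $(-1,1)$ and are bounded away from the endpoints, so each summand is bounded by an explicit constant (depending on $t$) divided by $\sqrt{l}$; combining these with the given multiplicities yields a concrete admissible value of $C$.

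For the characterization of zeros, I would first verify by direct calculation using $P_0(t) \equiv 1$, $P_1(t) = t$, and $P_2(t) = (3t^2-1)/2$ that $c_0 = 12$, $c_1 = 6$, and $c_2 = 0$. For $l \ge 3$, the bound from the previous step already forces $c_l > 1 - C/\sqrt{l} > 0$ as soon as $l > C^2$, disposing of all sufficiently large $l$ in a single stroke. The finitely many remaining values of $l$ can then be checked by computing $P_l$ at the five nonzero evaluation points via Bonnet's three-term recurrence $(l+1)P_{l+1}(t) = (2l+1)tP_l(t) - lP_{l-1}(t)$; because each evaluation point is rational, the recurrence produces exact rational values of $c_l$, and nonvanishing can be verified mechanically.

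The main obstacle is quantitative rather than conceptual: a naive application of the asymptotic bound produces a moderately large constant $C$, so the range of $l$ that must be checked by hand could be in the dozens. This can be tightened by separating the even and odd parities (noting $P_l(0) = 0$ for odd $l$, and that the $P_l(t)$ for $t > 0$ alternate in sign with $l$ in a predictable oscillatory way), or by using Szeg\H{o}'s refined inequality which retains the cosine factor from the Laplace asymptotic. Either refinement cuts the verification down to a handful of cases, but does not alter the structure of the argument outlined above.
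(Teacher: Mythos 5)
Your proposed proof is correct in principle but takes a genuinely different route from the paper for the harder direction (showing $c_l\ne 0$ for $l\ne 2$). You use the Bernstein-type bound $|P_l(t)|\le\sqrt{2/(\pi l\sqrt{1-t^2})}$ to reduce to a finite check: the bound gives $c_l>1-C/\sqrt{l}>0$ once $l>C^2$, and the remaining $l$ are verified by exact rational computation via Bonnet's recurrence. The paper instead makes a single clever substitution: it introduces $Q_l(t)=5^l\,l!\,P_l(t)$, so that $Q_l(k/5)$ is always an integer, and then proves that $5^l\,l!\,c_l\not\equiv 0\pmod{16}$ for all $l\ge 6$ by showing that the residues of $Q_l(k/5)$ mod $16$ eventually become periodic (period $8$) in $l$ for $k\in\{0,2,4\}$ and stabilize at $0$ for $k\in\{1,3,5\}$; only $l\le 5$ are then checked by hand. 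Your approach has the virtue of recycling the quantitative bound you need anyway, but with the naive constant $C\approx 10$ it leaves a finite check of roughly a hundred cases, whereas the modular argument cuts this to six. Both are valid; the paper's is the tighter, more self-contained route, while yours is more elementary and transparent about the finite computation. One small caution: if you present the argument, you should actually carry out the finite check (or cite a specific computation), since ``can be verified mechanically'' is not a proof; the paper's residue patterns face the same obligation but for a far smaller set.
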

\begin{proof}
We introduce rescaled Legendre polynomials,
$Q_l(t) = 5^l l! P_l(t)$. From their recurrence relation---given by
$Q_{l+1}(t) = (2l+1) (5t) Q_l(t) - 25 l^2 Q_{l-1}(t)$---and the base cases---$Q_0(t)=1$ and $Q_1(t)=5t$---
it is clear that the values of $Q_l(t)$ at $t=k/5$ for $k=0,\ldots,5$ are integers.
We are interested in residues of these integers modulo $16$. If
$Q_l(k/5)\equiv Q_{l+1}(k/5)\equiv0\pmod{16}$ for some $k$ and $l$ then for all $l'\ge l$
we also have $Q_{l'}(k/5)\equiv0\pmod{16}$. This is in fact the case, as can be easily checked,
for $k=1,3,$ or $5$ and $l=6$. For $k=0,2,$ or $4$ it is easy to show by induction that
the residue of $Q_l(k/5)$ modulo $16$ depends only on $k$ and the residue of $l$ modulo $8$ and
takes the following values:
$$\begin{aligned}
Q_l(0) &\equiv& 1,0,7,0,9,0,7,0 \pmod{16}\\
Q_l(2/5) &\equiv& 4,8,12,8,4,8,12,8 \pmod{16}\\
Q_l(4/5) &\equiv& 3,12,5,4,11,12,5,4 \pmod{16}\\
\text{ resp.\ for } l&\equiv&0,1,2,3,4,5,6,7\pmod{8}\text.
\end{aligned}$$
Therefore, it is easy to verify that regardless of the residue of $l$ modulo $8$,
the quantity $5^l l! c_l = Q_l(1)+3Q_l(\tfrac{4}{5})+Q_l(\tfrac{3}{5})+4Q_l(\tfrac{2}{5})+2Q_l(\tfrac{1}{5})+Q_l(0)$
is an integer of non-zero residue modulo $16$ for $l\ge 6$ and therefore $c_l$ does not vanish.
The cases $l<6$ are easily checked by hand. The second part of the lemma
follows from the bound $|P_l(t)|<(\pi l\sqrt{1-t^2}/2)^{-1/2}$ \cite{bernstein}.
\end{proof}

Fixing some arbitrary pole $\mathbf{p}\in S^2$, we define a zonal measure (function) to
be a measure (function) on $S^2$ which
is invariant with respect to rotations that preserve $\mathbf{p}$. A convolution of a function $f$
with a zonal measure $\mu$ is given by $(\mu*f)(\mathbf{y}) = \int f(\mathbf{x}) d\mu(U_\mathbf{y}(\mathbf{x}))$,
where $U_\mathbf{y}$ is any rotation which takes $\mathbf{y}$ to $\mathbf{p}$.
Convolution of $f$ with a zonal measure acts as multiplier transformation on the 
harmonic expansion of $f$ \cite{convolutions}. That is, if $f(\mathbf{x}) = \sum_{l=0}^{\infty}f_l(\mathbf{x})$,
where $f_l(\mathbf{x})$ is a spherical harmonic of degree $l$,
then $(\mu*f)(\mathbf{x}) = \sum_{l=0}^{\infty}c_l f_l(\mathbf{x})$. 

Consider the 24 vertices of the Voronoi polytope of $A_3^*$ (the Archi\-medean truncated octahedron)
rotated in such a way that
one of them is at $\mathbf{p}$, and denote them as $\mathbf{x}_i$, $i=1,\ldots,24$.
There is a unique zonal measure $\mu$ such that for every continuous zonal function $f$
$$\int_{S^2}f(\mathbf{y})d\mu(\mathbf{y}) = \frac{1}{2} \sum_{i=1}^{24}f(\mathbf{x}_i)\text.$$
From the values $\langle\mathbf{p},\mathbf{x}_i\rangle$, $i=1,\ldots,24$, 
the multiplier coefficients associated with convolution with this measure
can be easily calculated (see Ref. \cite{convolutions}). It can be easily
shown that these coefficients vanish for odd $l$ and are equal to the
coefficients $c_l$ of Lemma \ref{lemcl} for even $l$.
The proof of the following two lemmas is identical to the proofs of Lemmas 4 and 2
of Ref. \cite{kalluspack} respectively.
We denote by $\sigma$ the Lebesgue measure on $S^2$ normalized such that $\sigma(S^2)=1$.

\begin{lem}\label{lemphi}Let $\mu$ be the zonal measure described above,
let $\Phi$ be the operator of convolution with
$\mu$, and let $Z$ be the space, equipped with the $L^1(\sigma)$ norm, of even
functions on $S^2$ for which $f_2=0$. Then $\Phi$ maps $Z$ to $Z$, and as an
operator $Z\to Z$ it is one-to-one, bounded, and has a bounded inverse.\end{lem}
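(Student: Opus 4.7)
The plan rests on the multiplier identity $(\Phi f)_l = c_l f_l$ for the $l$-th spherical harmonic component of $f$. Central symmetry of the permutohedron vertex set (for each vertex $\mathbf{x}_i$, $-\mathbf{x}_i$ is also a vertex) forces $c_l=0$ for all odd $l$, and Lemma \ref{lemcl} asserts that $c_2=0$ while $c_l \ne 0$ for every other $l$, with $c_l \to 1$ as $l \to \infty$. Thus $Z$ is exactly the subspace on which each nonzero harmonic component is paired with a nonzero multiplier. Three of the four conclusions then follow readily: $\Phi$ takes $Z$ to $Z$ because $(\Phi f)_l = c_l f_l$ vanishes whenever $f_l$ or $c_l$ does; $\Phi$ is bounded on $L^1(\sigma)$ because convolution with the finite measure $\mu$ satisfies $\|\Phi f\|_{L^1} \le \|\mu\|\,\|f\|_{L^1}$ by Fubini; and $\Phi$ is injective on $Z$ because $\Phi f=0$ forces $c_l f_l=0$ for every $l$, while membership in $Z$ has already eliminated the degrees for which $c_l$ vanishes.

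The main obstacle is showing that the inverse is bounded in the $L^1(\sigma)$ operator norm, as opposed to $L^2$ where this would be immediate from the diagonal form and from $c_l^{-1}\to 1$. I would decompose $Z = Z_F \oplus Z_H$, where $Z_F$ is the finite-dimensional span of the even-degree harmonics of degree at most some $L_0$ (excluding $l=2$), and $Z_H$ is the $L^1$-closure of the higher-degree harmonics. Choose $L_0$ via Lemma \ref{lemcl} so that $|c_l - 1| \le \tfrac{1}{2}$ whenever $l > L_0$. The projections onto these two subspaces are bounded on $L^1$, because the projection to $Z_F$ is convolution with a smooth zonal Dirichlet-type kernel. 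On $Z_F$, injectivity of $\Phi$ combined with finite dimensionality gives a bounded inverse automatically. On $Z_H$, I would invert $\Phi = I - (I-\Phi)$ by a Neumann series, exploiting that the multipliers of $I-\Phi$ have absolute value at most $\tfrac{1}{2}$.

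The delicate step is that a multiplier operator whose multipliers are small in absolute value need not have small $L^1$ operator norm, so convergence of the Neumann series requires more than Plancherel. My plan is to realize each partial sum of the series as convolution with an explicit zonal kernel on $S^2$, and to bound the total variation of these kernels uniformly in the number of terms by combining the quantitative decay $|c_l-1| \le C l^{-1/2}$ from Lemma \ref{lemcl} with the pointwise Legendre bound $|P_l(t)| \le (\pi l\sqrt{1-t^2}/2)^{-1/2}$ from \cite{bernstein}. Patching the finite-dimensional inverse on $Z_F$ together with the Neumann-series inverse on $Z_H$ via the bounded projections then produces the required bounded inverse of $\Phi$ on all of $Z$.
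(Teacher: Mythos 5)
The paper gives no proof of this lemma; it states that the argument is identical to that of Lemma~3 in Ref.~\cite{kalluspack}, so there is no in-text proof for a line-by-line comparison. Evaluated on its own, your outline is sound and takes the approach that this setup (the $l^{-1/2}$ decay of $c_l-1$ together with the Bernstein pointwise bound on Legendre polynomials, both made available in Lemma~\ref{lemcl}) is clearly designed for. The easy parts ($\Phi$ preserves $Z$, boundedness from $\|\Phi f\|_1\le\|\mu\|\,\|f\|_1$, injectivity from non-vanishing of $c_l$ for $l\ne 2$ together with uniqueness of spherical-harmonic expansions in $L^1$) are right, and the splitting $Z=Z_F\oplus Z_H$ via the bounded polynomial-kernel projection is the right way to isolate the finitely many degrees where $|1-c_l|$ might exceed $1/2$.

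Two small points are worth tightening. First, you aim to bound the \emph{total variation of the partial sums} uniformly, which by itself only gives a weak-* convergent subsequence; to close the loop one should either add the remark that the weak-* limit is a finite measure whose multipliers are $c_l^{-1}$ on $Z_H$ (so convolution with it inverts $\Phi$ there), or, more directly, observe that after a fixed number of Neumann terms the tail has multipliers $(1-c_l)^{k}/c_l=O(l^{-k/2})$, and once $k\ge 5$ the series $\sum_{l}(2l+1)\,l^{-k/2}P_l(\cos\theta)$ converges absolutely and uniformly by the Bernstein bound, giving a \emph{continuous} kernel for the tail. The inverse is then the identity plus finitely many convolutions with the finite signed measure $\delta_{\mathbf p}-\mu$ plus convolution with a continuous function, which is manifestly $L^1$-bounded. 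Second, it is worth saying explicitly why the block decomposition is respected: $\Phi$ is diagonal in degree, so it preserves both $Z_F$ and $Z_H$, and the finite-dimensional block is invertible precisely because $c_l\ne 0$ there. With these touches your argument is complete and, as far as one can tell from the reference, is the intended one.
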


\begin{lem}\label{lemr2}
Given $\eps>0$, there exists $\eps'>0$ such that if a convex body
$K$ satisfies $(1-\eps')B^3\subseteq K\subseteq (1+\eps')B^3$, then
$K$ has a linear image $K'=TK$ that satisfies $(1-\eps)B^3\subseteq K'\subseteq (1+\eps)B^3$
and whose radial function has mean $1$ and vanishing second spherical harmonic
component.\end{lem}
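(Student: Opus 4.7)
The plan is to use a small linear transformation $T = s(\idd + M)$, with $s > 0$ and $M$ a symmetric $3\times 3$ matrix, to simultaneously kill the mean offset and the degree-$2$ spherical harmonic component of the radial function. The key observation is that, to first order, if $r_K = 1 + \rho$ then $r_{TK}(\mathbf{u}) \approx s\bigl(1 + \rho(\mathbf{u}) + \langle M\mathbf{u}, \mathbf{u}\rangle\bigr)$, and the linear map $M \mapsto \langle M\cdot, \cdot\rangle$ is an isomorphism from $\mathrm{Sym}^3$ onto the direct sum of degree-$0$ and degree-$2$ spherical harmonics on $S^2$: the trace of $M$ picks out the constant part and the traceless part of $M$ picks out the degree-$2$ part. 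Thus the $6$-parameter family $(s, M)$ matches exactly the $1 + 5 = 6$ degrees of freedom we wish to annihilate.

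To turn this first-order observation into a proof, define
\[ F(K, M, s) = \biggl( \int_{S^2} r_{s(\idd + M)K}\, d\sigma - 1, \; \Pi_2\, r_{s(\idd + M)K} \biggr), \]
where $\Pi_2$ is the orthogonal projection in $L^2(S^2, \sigma)$ onto the degree-$2$ spherical harmonics. At $(K, M, s) = (B^3, 0, 1)$ we have $F = 0$, and the partial derivative $\partial F / \partial(M, s)$ is precisely the isomorphism described above. Since $K \mapsto r_K$ is Lipschitz continuous from $(\mathcal{K}^3_0, \delta_H)$ into $C(S^2)$ for bodies sandwiched between two fixed positive multiples of $B^3$, $F$ is continuous in $K$ and $C^1$ in $(M, s)$. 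The implicit function theorem, applied to the finite-dimensional problem in $(M, s)$ with $K$ as a parameter, then yields, for $K$ in a sufficiently small Hausdorff neighborhood of $B^3$, unique $(M(K), s(K))$ near $(0, 1)$ solving $F(K, M(K), s(K)) = 0$, with Lipschitz dependence on $K$.

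Finally, if $(1-\eps')B^3 \subseteq K \subseteq (1+\eps')B^3$ then $\delta_H(K, B^3) \le 2\eps'$, so by Lipschitz continuity $|s(K) - 1| + ||M(K)|| \le C\eps'$ for a universal constant $C$. Setting $T = s(K)(\idd + M(K))$, one gets $||T - \idd|| \le C'\eps'$ and therefore $(1 - C''\eps')B^3 \subseteq TK \subseteq (1 + C''\eps')B^3$ for some $C''$. Taking $\eps'$ small enough that $C''\eps' \le \eps$ produces the desired $K' = TK$. The main technical obstacle is merely verifying that $K \mapsto r_K$ is Lipschitz into $C(S^2)$ on nearly-spherical bodies, but this is routine: uniform lower and upper radial bounds translate small Hausdorff perturbations of $K$ into uniformly small pointwise perturbations of $r_K$.
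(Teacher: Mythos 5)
The paper does not reproduce a proof of this lemma; it declares the proof ``identical'' to that of Lemma~1 of Ref.~\cite{kalluspack}, which rests on exactly the degree-of-freedom-matching idea you use: the six-parameter family $T=s(\idd+M)$ with $M$ symmetric linearizes (to first order in $M$ and $s-1$) onto precisely the $1+5$-dimensional span of the degree-$0$ and degree-$2$ spherical harmonics, so an implicit-function/fixed-point argument kills those components while keeping $T$ near $\idd$. Your reconstruction is correct and matches the intended argument; the only point worth tightening is the claim that $F$ is $C^1$ in $(M,s)$, which is true but is best seen by a change of variables pulling the $T$-dependence into the smooth integrand (rather than from Lipschitz dependence of $r_K$ on $K$), since $r_K$ itself need not be differentiable.
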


\begin{thm}\label{mainthm} There exists $\eps>0$ such that if a convex body $K$ is a non-ellipsoidal
origin-symmetric convex body and $(1-\eps)B^3\subseteq K\subseteq (1+\eps)B^3$,
then $\vartheta(K)<\vartheta(B^3)$. In other words, $B^3$ is relatively worst covering.\end{thm}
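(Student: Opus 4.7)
The plan is to follow the general structure of the packing argument in Ref.~\cite{kalluspack}, adapted via Theorem~\ref{detthm}. First I would normalize: by linear invariance of $\vartheta$ and Lemma~\ref{lemr2}, we may replace $K$ by a linear image whose radial function $r_K = 1 + \rho$ has vanishing mean $\bar\rho = 0$ and vanishing second spherical harmonic $\rho_2 = 0$. The hypothesis that $K$ is non-ellipsoidal then forces $\rho$ to be a nonzero element of the space $Z$ of Lemma~\ref{lemphi}.

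Next I would compare both sides of the target inequality $\vll K/d_K < \vll B^3/d(\Lambda_0)$. The volume expansion
$$\frac{\vll K}{\vll B^3} = \int_{S^2} r_K^3\,d\sigma = 1 + 3\|\rho\|_2^2 + O(\eps\|\rho\|_2^2)$$
follows from $\bar\rho = 0$ and $\|\rho\|_\infty \le \eps$. For the determinant, I would exploit the rotational symmetry of $B^3$: for each $U \in SO(3)$, $U\Lambda_0$ is also a critical covering lattice of $B^3$, and applying Theorem~\ref{detthm} to $U\Lambda_0$ yields
$$\frac{d_K}{d(\Lambda_0)} \ge 1 + L_U(\rho) - \eps'\sum_a |\rho(U\mathbf{x}_a)|,$$
where $L_U(\rho) = w\sum_a \rho(U\mathbf{x}_a)$ is a weighted sum over the 24 vertices $\mathbf{x}_a$ of the permutohedron, all receiving the same weight $w = 1/8$, since the symmetry of $A_3^*$ makes its six primitive simplices equivalent regular tetrahedra with common $\upsilon_i = 1/2$ and $\alpha_{ij} = 1/4$.

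The key step is to lower-bound $\sup_U L_U(\rho)$ using the multiplier operator $\Phi$. For any $\mathbf{y} \in S^2$, averaging $L_{U_\mathbf{y} R}(\rho)$ over rotations $R \in SO(2)_\mathbf{p}$ fixing the pole $\mathbf{p}$ produces exactly $\frac{1}{4}\Phi\rho(\mathbf{y})$, so $\sup_U L_U(\rho) \ge \frac{1}{4}\max_\mathbf{y}\Phi\rho(\mathbf{y})$. Since $(\Phi\rho)_0 = c_0\bar\rho = 0$, the function $\Phi\rho$ has mean zero, so $\max\Phi\rho \ge \frac{1}{2}\|\Phi\rho\|_1$; Lemma~\ref{lemphi} then gives $\|\Phi\rho\|_1 \ge \|\rho\|_1/\|\Phi^{-1}\|$, and hence $\sup_U L_U(\rho) \ge C'\|\rho\|_1$ for some $C' > 0$. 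Using $\eps' = O(\sqrt\eps)$ (from the tangent estimate in the proof of Theorem~\ref{detthm}) together with $\|\rho\|_2^2 \le \|\rho\|_\infty \|\rho\|_1 \le \eps\|\rho\|_1$, the comparison becomes
$$\frac{d_K}{d(\Lambda_0)} - \frac{\vll K}{\vll B^3} \ge C'\|\rho\|_1 - O(\sqrt\eps)\|\rho\|_\infty - 3\eps\|\rho\|_1(1 + O(\eps)).$$

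The hard part is ensuring strict positivity uniformly in $\rho$, especially for highly concentrated perturbations where $\|\rho\|_\infty \gg \|\rho\|_1$ and the $L^1$-based bound on $\sup_U L_U$ is too weak relative to the $\|\rho\|_\infty$-scaled error term. The resolution exploits the convexity of $K$: since $K$ is trapped between concentric balls of radii $1 \pm \|\rho\|_\infty$, the radial function $r_K$ is $O(\sqrt{\|\rho\|_\infty})$-Lipschitz on $S^2$, forcing any peak of height $\|\rho\|_\infty$ to spread over a spherical cap and yielding a quantitative lower bound on $\|\rho\|_1$ in terms of $\|\rho\|_\infty$. Applying Theorem~\ref{detthm} with the tightened shell-radius parameter $\|\rho\|_\infty$ in place of the ambient $\eps$ correspondingly sharpens $\eps'$, so that the linear gain dominates and the comparison closes for all sufficiently small $\eps$.
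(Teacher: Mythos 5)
Your outline follows the paper's strategy (normalize via Lemma~\ref{lemr2}, apply Theorem~\ref{detthm} to rotations $U(K)$, average over rotations fixing a pole to bring in the multiplier operator $\Phi$, invert $\Phi$ on $Z$), but there is a genuine gap in how the error term from Theorem~\ref{detthm} is handled. You estimate $\sup_U L_U(\rho)$ \emph{by itself} and then bound the error $\eps'\sum_a|\rho(U\mathbf{x}_a)|$ at the (uncontrolled) optimizing $U^*$ by $O(\eps')\|\rho\|_\infty$. The paper avoids exactly this by keeping the error inside the minimization: after averaging over $\mathcal U_{\mathbf y}$, equation~\eqref{del2} has \emph{both} $-\tfrac18\Phi[\rho](\mathbf y)$ and $\eps'\Phi[|\rho|](\mathbf y)$, and then one chooses $\mathbf y$ using $\int\Phi[\rho]\,d\sigma=0$ so that the combined expression is at most $-\tfrac1{16}\|\Phi[\rho]\|_1+\eps'\|\Phi[|\rho|]\|_1$. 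Crucially, the error contribution there is $\eps'\|\Phi[|\rho|]\|_1\le\eps'\|\Phi\|\,\|\rho\|_1$, i.e.\ it is controlled by $\|\rho\|_1$, the same quantity as the gain, so the linear-in-$\|\rho\|_1$ terms dominate for small $\eps$.

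Your proposed fix via convexity does not close the gap. Convexity and the shell bound $\|\rho\|_\infty=\eta$ give the radial function Lipschitz constant $O(\sqrt\eta)$ and hence $\|\rho\|_1\gtrsim\eta^2$; tightening the shell parameter gives $\eps'=O(\sqrt\eta)$. But then the comparison in your final display is
$C'\|\rho\|_1-O(\sqrt\eta)\,\eta-O(\eta)\|\rho\|_1$, and in the worst case $\|\rho\|_1\sim\eta^2$ this is $C'\eta^2-O(\eta^{3/2})-O(\eta^3)$, which is \emph{negative} for small $\eta$ (the error $\eta^{3/2}$ dominates the gain $\eta^2$). A sharp bump of height $\eta$ and angular width $\sim\sqrt\eta$ on an otherwise slightly shrunken ball realizes $\|\rho\|_1\sim\eta^2$, so this is not a vacuous worst case. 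To repair the argument you would need to bound the error at the chosen $\mathbf y$ by $O(\eps')\|\rho\|_1$ rather than $O(\eps')\|\rho\|_\infty$, which is precisely what the paper's joint treatment of $\Phi[\rho]$ and $\Phi[|\rho|]$ accomplishes. (A minor point: the sharper volume expansion $1+3\|\rho\|_2^2+O(\eps\|\rho\|_2^2)$ you use is fine and eventually reduces to the paper's $1+O(\eps)\|\rho\|_1$ via $\|\rho\|_2^2\le\|\rho\|_\infty\|\rho\|_1$; this part is not where the difficulty lies.)
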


\begin{proof}
Given Lemma \ref{lemr2} and the fact that $\vartheta$ is invariant under linear transformations,
we may assume without loss of generality that $K$ is a non-spherical body whose radial function
has an expansion in spherical harmonics of the form
$$r_K(\mathbf{x}) = 1+\rho(\mathbf{x})=1+\sum_{l\text{ even},l\ge4}\rho_l(\mathbf{x})\text.$$
The volume of $K$ satisfies
\begin{equation}\label{voleqn}
\vll K = \frac{4\pi}{3}\int_{S^2} r_K^3(\mathbf{x})d\sigma \le \frac{4\pi}{3}+\eps''||\rho||_1\text,
\end{equation}
where $\eps''$ is arbitrarily small for arbitrarily small $\eps$. 

We consider all the rotations $U(K)$ of the body $K$ and the determinant
of the covering lattice obtained when the construction of Theorem \ref{detthm}
is applied to $U(K)$. Note that the determinant obtained depends only on
$\rho_{ij} = r_{U(K)}(\mathbf{x}_{ij}) -1 = \rho(U^{-1}(\mathbf{x}_{ij}))$,
where $\mathbf{x}_{ij}$ run over all 24 vertices of the three dimensional
permutohedron. Let us define
$\Delta_K=1-\tfrac{\vartheta(K)^{-1}}{\vartheta(B^n)^{-1}}$. Combining
\eqref{voleqn} with Theorem \ref{detthm} we get
\begin{equation}\label{del1}
\Delta_K\le \min_{U\in SO(3)}\left[-\frac{1}{8}\sum_{i=1,j=1}^{6,4}\rho_{ij}+\eps'\sum_{ij}|\rho_{ij}|+\eps''||\rho||_1\right]\text.
\end{equation}
We may pick a single point, say $\mathbf{x}_{11}$, and decompose $SO(3)$ into
subsets $\mathcal U_\mathbf{y}$ of all rotations such that $U^{-1}(\mathbf{x}_{11})=\mathbf{y}$.
In each subset $\mathcal U_\mathbf{y}$ the minimum on the right hand side of \eqref{del1}
is no larger than the average value over $\mathcal U_\mathbf{y}$ (with respect to the obvious
uniform measure). This averaging procedure transforms \eqref{del1} into
\begin{equation}\label{del2}
\Delta_K\le \min_{\mathbf{y}\in S^2}\left[-\frac{1}{8}\Phi[\rho](\mathbf{y})+\eps'\Phi[|\rho|](\mathbf{y})+\eps''||\rho||_1\right]\text,
\end{equation}
where $\Phi$ is the convolution operator in Lemma \ref{lemphi}. Since $\int \Phi[\rho] d\sigma=0$ and
$\Phi[|\rho|]$ is non-negative, we have that
$\min(-\tfrac{1}{8}\Phi[\rho]+\eps'\Phi[|\rho|])\le-\tfrac{1}{16}||\Phi[\rho]||_1+\eps'||\Phi[|\rho|]||_1$,
and so
$$\Delta_K\le-\frac{1}{16}||\Phi^{-1}||^{-1}\cdot||\rho||_1+\left(\eps'||\Phi||+\eps''\right)||\rho||_1\text.$$
Therefore, we conclude that there is a coefficient $c>0$ such that $\Delta_K<-c||\rho||_1$.\end{proof}

Theorem \ref{mainthm} and the fact that the disk is pessimal for covering in the plane motivate the following
conjecture:

\begin{cnj}For all origin-symmetric convex bodies $K\subseteq\mathbb{R}^3$ that are not ellipsoids, $\vartheta(K)<\vartheta(B^3)$.\end{cnj}

\bibliographystyle{amsplain}
\bibliography{cover}

\end{document}